\numberwithin{equation}{section}
\newtheorem{lem}{Lemma}[section]%
\newtheorem{notation}{Notation}[section]%
\newtheorem{theorem}[lem]{Theorem}%
\newtheorem{problem}[lem]{Problem}%
\newtheorem{prop}[lem]{Proposition}%
\newcommand{\Sym}{\mathop{\mathrm{Sym}}}
\def\a{\alpha}
  \def\G{\Gamma}
\def\nd{\mathrel{\bigm|\kern-.7em/}}
\def\f{\noindent}
\def\Aut{\hbox{\rm Aut}}
\def\Cay{\hbox{\rm Cay}}
\def\Haar{\hbox{\rm Haar}}
\def\mz{{\mathbb Z}}
\def\norm#1#2{{\bf N}_{{#1}}{{(#2)}}}
\begin{document}
\title[On HDR]{On Haar digraphical representations of groups}

\author{Jia-Li Du}
\address{Jia-Li Du, Department of Mathematics, China University of Mining and Technology, Xuzhou 221116, China}
\email{dujl@cumt.edu.cn}

\author{Yan-Quan Feng}
\address{Yan-Quan Feng, Department of Mathematics, Beijing Jiaotong University, Beijing 100044, China}
\email{yqfeng@bjtu.edu.cn}

\author{Pablo Spiga}
\address{Pablo Spiga, Dipartimento di Matematica e Applicazioni, University of Milano-Bicocca, Via Cozzi 55, 20125 Milano, Italy} 
\email{pablo.spiga@unimib.it}

\footnotetext[1]{Corresponding author Jia-Li Du. E-mail: dujl@cumt.edu.cn}

\date{}
 \maketitle

\begin{abstract}
In this paper we extend the notion of digraphical regular representations in the context of Haar digraphs.  Given a group $G$, a {\em Haar digraph} $\G$ over $G$ is a bipartite digraph having a bipartition $\{X,Y\}$ such that $G$ is a group of automorphisms of $\G$ acting regularly on $X$ and on $Y$. We say that $G$ admits a {\em Haar digraphical representation} (HDR for short),
if there exists a Haar digraph over $G$ such that its automorphism group is isomorphic to $G$.
In this paper, we classify finite groups admitting a HDR.

\bigskip
\f {\bf Keywords:} Semiregular group, regular representation, DRR, GRR, Haar digraph.

\medskip
\f {\bf 2010 Mathematics Subject Classification:} 05C25, 20B25.
\end{abstract}

\section{Introduction}
By a {\em digraph} $\Gamma$, we mean an ordered pair $(V, A)$ where the vertex set $V$ is a non-empty set and the arc set $A \subseteq V \times V$ is
a binary relation on $V$. The elements of $V$ and $A$ are called vertices and arcs of $\Gamma$, respectively. For simplicity, we write $V(\Gamma):=V$ and $A(\Gamma):=A$.
An automorphism of $\Gamma$ is
a permutation $\sigma$ of $V$ fixing $A$ setwise, that is, $(x^\sigma , y^\sigma ) \in A$ for every $(x, y) \in A$. The
digraph $\Gamma $ is a {\em graph} if the binary relation $A$ is symmetric.

A digraph is called {\em regular} if each vertex has the same out-valency and the same in-valency.
Throughout this paper, all groups and digraphs are finite, and all digraphs are regular.

Let $G$ be a group and let $S$ be a subset of $G$.
The {\em Cayley digraph} $\G:=\Cay(G,R)$ is the digraph with $V(\Gamma):=G$ and with
$A(\G):=\{(g,rg) ~|\ g\in G,r\in R\}$.
The right regular representation of $G$ gives rise to an embedding of $G$ into $\Aut(\Gamma)$ and we identify $G$ with its image under this permutation representation. We say that a group admits a {\em (di)graphical
regular representation} (resp. GRR or DRR for short)
if there exists a Cayley (di)graph $\G$ over
$G$ such that $\Aut(\G)= G$.
Babai~\cite{Babai} proved that, except for $Q_8$, $\mz_2^2$,
$\mz_2^3$, $\mz_2^4$ and $\mz_3^2$, every finite group admits a DRR. It is clear that, if
a group $G$ admits a GRR, then $G$ admits a DRR, however
the converse is not true. Indeed, despite the natural argument used by Babai for the classification of groups admitting a DRR, the classification of groups admitting a GRR has required considerable more work. For some of the most influential papers along the way we refer to~\cite{Imrich,ImrichWatkins,NowitzWatkins1,NowitzWatkins2}.
Watkins~\cite{Watkins} observed that there are two infinite
families of graphs admitting no GRR:
generalised dicyclic groups, and abelian groups of exponent
greater than two. Then, Hetzel~\cite{Hetzel} has proved that besides these two infinite families, among soluble groups, there are only 13 more groups admitting no GRR. Finally, Godsil~\cite{Godsil} has put the last piece into the puzzle and has shown that every non-solvable group admits a GRR, and so completed the classification of groups admitting a GRR.

Once the classification of DRRs and GRRs was completed, researchers proposed and investigated various natural generalisations. For instance, Babai and Imrich~\cite{BabaiI} have classified finite groups admitting a tournament regular representation, TRR for short. Morris and Spiga~\cite{MorrisSpiga1,MorrisSpiga3,Spiga}, answering a question of Babai~\cite{Babai}, have classified the finite groups admitting an oriented regular representation, ORR for short.
For more results, generalising the classical DRR and GRR classification in various direction, we refer to
\cite{DSV,DFS1,DFS,MSV,Spiga2,Spiga3,Xiaf}.

We now describe the generalisation we intend to investigate in this paper. Let $G$ be a permutation group on a set $\Omega$ and let $\omega \in \Omega$. Denote by $G_\omega$ the stabilizer of $\omega$ in $G$, that is, the subgroup
of $G$ fixing $\omega$. We say that $G$ is {\em semiregular} on $\Omega$ if $G_\omega = 1$ for every $\omega \in \Omega$, and {\em regular} if it is semiregular and transitive.
An $m$-Cayley (di)graph $\Gamma$ over a group $G$ is defined as a (di)graph which has a semiregular group of automorphisms
isomorphic to $G$ with $m$ orbits on its vertex set. When $m = 1$, $1$-Cayley (di)graphs are the usual Cayley (di)graphs.
We say that a group $G$ admits a (di)graphical $m$-semiregular representation (D$m$SR and G$m$SR, for short),
if there exists a regular $m$-Cayley (di)graph $\Gamma$ over $G$ such that $\Aut(\Gamma)\cong
 G$. In particular, D$1$SRs and G$1$SRs are the
usual GRRs and DRRs. For each $m\in\mathbb{N}$, we have  classified in~\cite{DFS} the finite groups admitting a D$m$SR and the finite groups admitting a G$m$SR. In this paper we propose a natural variant of this problem.

A {\em bipartite} $2$-Cayley (di)graph (over a group $G$, where the two parts of the bipartition are the two orbits of $G$) is known as {\em Haar (di)graph} in the literature. We say that a finite group $G$ admits a {\em Haar (di)graphical representation} (resp. HDR or HGR for short),
if there exists a Haar (di)graph over $G$ such that its automorphism group isomorphic to $G$.

\begin{theorem}\label{theo=main}
With the only exceptions of $\mz_1$, $\mz_2$, $\mz_3$, $\mz_2^2$ and $\mz_2^3$, every finite group admits a $\mathrm{HDR}$. 
\end{theorem}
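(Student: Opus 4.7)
My plan is to prove the two directions of the claim separately.

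For the negative direction, the five excluded groups $\mz_1,\mz_2,\mz_3,\mz_2^2,\mz_2^3$ all have order at most $8$. Since a Haar digraph over $G$ is determined by a pair $(R,S)$ of subsets of $G$ with $|R|=|S|$ (this equality being forced by the regularity hypothesis, because vertices in the two orbits have out-valencies $|R|$ and $|S|$), the total number of Haar digraphs over each of these five groups is small. Up to natural equivalences (arc-reversal, orbit-swap, and the action of $\Aut(G)$), one can enumerate them all and verify in each case that the automorphism group properly contains the regular copy of $G$.

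For the positive direction, given any other finite group $G$, the task is to exhibit a pair $(R,S)$ of subsets of $G$ with $|R|=|S|$ such that $\BiCay(G,R,S)$ has automorphism group equal to the regular copy of $G$ acting by right translation on each orbit. Any extra automorphism $\sigma$ either preserves the bipartition $\{X,Y\}$ or swaps $X$ and $Y$. In the former case, writing $\sigma(g,0)=(\phi(g),0)$ and $\sigma(g,1)=(\psi(g),1)$, the pair $(\phi,\psi)$ must satisfy the compatibility equations $\psi(rg)\phi(g)^{-1}\in R$ and $\phi(sg)\psi(g)^{-1}\in S$ for all $g\in G$, $r\in R$, $s\in S$; in the latter case $\sigma$ encodes a group-theoretic equivalence between $R$ and $S$. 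A natural template is to take $R$ to be the connection set of a DRR of $G$, which exists by Babai's classification except when $G\in\{Q_8,\mz_2^2,\mz_2^3,\mz_2^4,\mz_3^2\}$, and to choose $S$ of the same cardinality as $R$ but structurally different (for instance, a translate $Ra$ for a cleverly chosen $a\in G$, or a small perturbation of $R$). For the three non-exceptional groups $Q_8,\mz_2^4,\mz_3^2$ that lack a DRR, explicit ad hoc HDRs can be constructed and verified directly, given their small order.

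The main technical obstacle I anticipate is verifying that the chosen pair $(R,S)$ actually eliminates both types of extra automorphisms simultaneously. Bipartition-preserving automorphisms are not ruled out merely by the DRR property of $R$, because the compatibility equations entangle $R$ and $S$; blocking swap-type automorphisms requires a careful structural asymmetry between $R$ and $S$. I would expect the techniques developed in the authors' earlier paper~\cite{DFS} on D$m$SRs (which handles the related general $m$-Cayley case without the bipartite restriction) to furnish the right framework for this analysis.
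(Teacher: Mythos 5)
Your outline matches the paper's high-level strategy (start from a DRR connection set $R$, pick a second set of the same size that is ``structurally different'', treat the DRR-less groups by ad hoc constructions), and you correctly identify the central difficulty; but you stop exactly at that difficulty and defer it to ``techniques from~\cite{DFS}'', so the proof has a genuine gap at its core. Two ideas are missing. First, to kill the bipartition-preserving automorphisms the paper does \emph{not} rely on the compatibility equations directly: it takes $\Haar(G,R\cup\{1\},L\cup\{1\})$ with $L\subseteq G\setminus(R^{-1}\cup\{1\})$, so that $g_{1-i}$ is the \emph{unique} common out- and in-neighbour of $g_i$. This forces every part-preserving automorphism to act by the same permutation $\phi$ of $G$ on both parts, whence $\phi\in\Aut(\Cay(G,R\cup\{1\}))=\Aut(\Cay(G,R))=G$; only the part-swapping automorphisms remain, and those exist iff $R^{\a}=L$ for some $\a\in\Aut(G)$. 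Without some such device, the DRR property of $R$ alone really does not control the pair $(\phi,\psi)$, as you note. Second, you give no argument that a valid $L$ exists. This needs (i) a preliminary lemma that $G$ has a DRR with $1\notin R$ and $|R|<(|G|-1)/2$ (so that $G\setminus(R^{-1}\cup\{1\})$ is large enough; the paper proves this via tournament regular representations and the Odd Order Theorem for odd $|G|$), and (ii) a counting argument producing an $L$ with $R^{\a}\neq L$ for all $\a$, using automorphism-invariants such as $|\{y: y^{-1}\notin y\textrm{'s set}\}|$ or the number of involutions. These invariants degenerate for elementary abelian $2$-groups, which is why $\mz_2^m$ ($m\geq 5$) requires a separate construction from a GRR of Imrich; your proposal does not flag this case.

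Two smaller points. Your concrete suggestion $S=Ra$ is unsafe: when $a$ is central (in particular for every abelian $G$) the map $\sigma_{1,a}$ of~\eqref{eq:3} swaps the two parts and is an automorphism of $\Haar(G,R,Ra)$, so the resulting digraph is vertex-transitive and never a HDR. Also, the negative direction is fine in spirit (finite check for the five small groups), and matches what the paper does by inspection and by \textsc{Magma}.
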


Du {\em et al}~\cite[Lemma 2.6(i)]{Duxu} have shown that Haar graphs over abelian groups are Cayley graphs. Hence, abelian groups do not admit HGRs.  Est\'elyi~\cite[Proposition~$11$]{Estelyi} has proved that the dihedral group of order $2n$ admits a HGR if and only if $n\ge 8$. To end this section, we  propose the following problem.

\begin{problem}\label{question}
Classify finite groups admitting a $\mathrm{HGR}$.
\end{problem}
We are not sure what the answer to this problem might be, but besides the finite abelian groups we are aware of no infinite family of groups admitting no HGR. For instance, every generalised quaternion group of order $4n$ with $4\le n\le 100$ admits a HGR. 

\section{Preliminaries and notation}

In what follows, we describe some preliminary results which will be used later.
We start by recalling  Babai's classification of DRRs.

\begin{theorem}\label{prop=DRR} {\rm \cite[Theorem 2.1]{Babai}}
A finite group $G$ admits a $\mathrm{DRR}$ if and only if $G$ is not isomorphic to one of the following five groups $Q_8$, $\mz_2^2$, $\mz_2^3$, $\mz_2^4$ or $\mz_3^2$.
\end{theorem}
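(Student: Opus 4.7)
The plan is to prove both implications of this classical theorem of Babai, combining a counting argument for the main direction with ad hoc verification for the five small exceptions.

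For the \textbf{``only if'' direction}, I would verify, for each of the groups $Q_8$, $\mz_2^2$, $\mz_2^3$, $\mz_2^4$ and $\mz_3^2$, that every Cayley digraph over $G$ has automorphism group strictly larger than the regular image of $G$. Since all five groups are very small, this is a finite check. For the elementary abelian examples the key observation is that every linear automorphism $\varphi\in\mathrm{GL}(G)$ satisfying $\varphi(R)=R$ belongs to $\Aut(\Cay(G,R))$, and in dimensions up to $4$ over $\mathbb{F}_2$ (and dimension $2$ over $\mathbb{F}_3$) one checks case by case that for every choice of $R$ such a nontrivial $\varphi$ exists. For $Q_8$, one runs through the $2^{7}$ subsets of $Q_8\setminus\{1\}$, working up to the action of $\Aut(Q_8)$ to reduce the number of cases, and exhibits an explicit extra automorphism in each case.

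For the \textbf{``if'' direction}, the approach is a counting argument. Let $n:=|G|$ and consider the family of all Cayley digraphs $\Cay(G,R)$ with $R\subseteq G\setminus\{1\}$; there are $2^{n-1}$ such digraphs. Since $G$ always embeds into $\Aut(\Cay(G,R))$ via its right regular representation, $\Cay(G,R)$ is a DRR precisely when $\Aut(\Cay(G,R))_1=1$. Thus it suffices to show that the number of ``bad'' subsets, namely those $R$ for which some nontrivial $\sigma\in\Sym(G)$ fixing $1$ lies in $\Aut(\Cay(G,R))$, is strictly smaller than $2^{n-1}$. For fixed $\sigma$, the condition $\sigma\in\Aut(\Cay(G,R))$ is equivalent to
\[
hg^{-1}\in R \iff \sigma(h)\sigma(g)^{-1}\in R \quad \text{for all } g,h\in G,
\]
which says precisely that $R$ is a union of classes of the equivalence relation $\sim_\sigma$ on $G\setminus\{1\}$ generated by the pairs $(hg^{-1},\,\sigma(h)\sigma(g)^{-1})$. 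Hence, writing $r(\sigma)$ for the number of $\sim_\sigma$-classes, the number of admissible $R$ for a given $\sigma$ equals $2^{r(\sigma)}$, and the target is
\[
\sum_{\substack{\sigma\in\Sym(G),\,\sigma(1)=1\\ \sigma\ne 1}} 2^{r(\sigma)} \;<\; 2^{n-1}.
\]

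The main obstacle is establishing this inequality, since the sum has $(n-1)!-1$ terms, and the naive bound $r(\sigma)\le n-1$ is useless. The standard strategy is to split the sum according to how ``rigid'' $\sigma$ is. For generic $\sigma$, the relation $\sim_\sigma$ forces a great deal of collapse among the elements of $G\setminus\{1\}$, and by iterating the generating identifications one can show a bound of the shape $r(\sigma)\le n/2+O(1)$; these $\sigma$ contribute at most $(n-1)!\cdot 2^{n/2+O(1)}$, which is dominated by $2^{n-1}$ once $n$ is above a modest threshold. The delicate case is when $\sigma$ is ``close'' to a group automorphism of $G$: here $r(\sigma)$ may be almost $n$, but the number of such $\sigma$ is at most $|\Aut(G)|$ times a small correction, and the ensuing constraint forces $R$ to be invariant under a nontrivial automorphism of $G$, a condition that one shows (for $G$ outside the five listed groups) is violated by at least one subset $R$ via a separate structural argument on orbits of $\Aut(G)$ on $G$. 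Combining the two estimates leaves a finite list of residual small groups to be checked by direct inspection; verifying that these residual failures are exactly $Q_8$, $\mz_2^2$, $\mz_2^3$, $\mz_2^4$ and $\mz_3^2$ completes the proof.
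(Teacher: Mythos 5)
The paper does not actually prove this statement: it is quoted directly from Babai's 1980 paper, whose argument is \emph{constructive} (explicit connection sets are assembled from an ordered generating set, with a handful of families treated separately); indeed the present paper relies on those explicit constructions later, in the proof of Lemma 3.6, where it invokes Babai's Lemmas 3.1 and 3.4. So there is no in-paper proof to match your sketch against, and the question is whether your sketch stands on its own. It does not.

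The fatal problem is the central inequality $\sum_{\sigma\ne 1,\,\sigma(1)=1} 2^{r(\sigma)} < 2^{n-1}$. That sum has $(n-1)!-1$ terms, and each term is at least $2$, since $r(\sigma)\ge 1$ for every $\sigma$ (the sets $R=\emptyset$ and $R=G\setminus\{1\}$ are always unions of $\sim_\sigma$-classes). Hence the left-hand side is at least $2\bigl((n-1)!-1\bigr)$, which already exceeds $2^{n-1}$ for every $n\ge 5$. Your claim that the generic contribution $(n-1)!\cdot 2^{n/2+O(1)}$ is dominated by $2^{n-1}$ is simply false: $(n-1)!$ grows super-exponentially, so no bound on $r(\sigma)$, however strong, can rescue a union bound indexed by all of $\Sym(G)_1$. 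To make a counting argument of this kind work one must first replace the full point stabilizer in $\Sym(G)$ by a much smaller set of canonical witnesses attached to each bad $R$ (this is essentially the content of the recent Morris--Spiga work on asymptotic enumeration of Cayley digraphs, and even that machinery yields an asymptotic statement rather than the exact five-group classification). Your ``only if'' direction is fine in principle as a finite verification, and the observation that every subset of $\mathbb{F}_2^m\setminus\{0\}$ with $m\le 4$ is stabilized by a nontrivial element of $\mathrm{GL}(m,2)$ is indeed the right reason those groups fail; but the ``if'' direction needs Babai's constructive argument (or an equivalent), not the proposed count.
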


We recall that a tournament is a digraph $\Gamma$ such that, for every two distinct vertices $x, y \in V(\Gamma)$, exactly one of
$(x, y)$ and $(y, x)$ is in $A(\Gamma)$. Observe that the Cayley digraph $\Cay(G,R)$ is a tournament if and only if $R\cap R^{-1}=\emptyset$
and $R\cup R^{-1}=G\setminus \{1\}$. In particular, finite groups of even order have no TRR. 

\begin{theorem}\label{prop=TRR} {\rm \cite[Theorem 1.5]{BabaiI}}
A  finite group of odd order admits a $\mathrm{TRR}$ if and only if it is not isomorphic to  $\mz_3^2$.
\end{theorem}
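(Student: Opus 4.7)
The plan is to treat the ``only if'' and ``if'' directions separately.

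For the ``only if'' direction, in $G=\mz_3^2$ the eight nonzero elements split into four inverse pairs $\{v,-v\}$, one for each one-dimensional subspace, and any tournament-Cayley set $R$ selects exactly one element from each pair, giving only $2^{4}=16$ candidates. The automorphism group $\mathrm{GL}(2,3)=\Aut(\mz_3^2)$ has order $48$ and acts on this $16$-element set. By the orbit-stabiliser theorem, every such $R$ has a stabiliser of order at least $48/16=3$, and any nonidentity element of this stabiliser extends to an automorphism of $\Cay(\mz_3^2,R)$ fixing $0$ and not lying in the regular action of $\mz_3^2$. Hence $\Aut(\Cay(\mz_3^2,R))\ne \mz_3^2$ for every admissible $R$, so $\mz_3^2$ admits no TRR.

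For the ``if'' direction, fix $G$ of odd order $n$ with $G\not\cong \mz_3^2$. Since $G$ has no involutions, the $n-1$ nonidentity elements partition into $(n-1)/2$ inverse pairs, and the family $\mathcal{T}$ of subsets $R\subseteq G\setminus\{1\}$ with $R\cap R^{-1}=\emptyset$ and $R\cup R^{-1}=G\setminus\{1\}$ has cardinality $|\mathcal{T}|=2^{(n-1)/2}$. The strategy is a Babai-style counting argument: bound the number of ``bad'' $R\in\mathcal{T}$ (those with $\Aut(\Cay(G,R))\ne G$) and show it is strictly smaller than $|\mathcal{T}|$, whence some $R$ yields a TRR. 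For each nonidentity $\sigma\in\Aut(G)$, the number of $R\in\mathcal{T}$ preserved setwise by $\sigma$ is $2^{c(\sigma)}$, where $c(\sigma)$ counts those $\sigma$-orbits on inverse pairs that admit a consistent choice of representative; using the odd order of $G$ one bounds $c(\sigma)\le (n-1)/2-d(\sigma)$ for a suitable displacement $d(\sigma)\ge 1$ coming from the cycle structure of $\sigma$, and summing a geometric-type series over $\Aut(G)\setminus\{1\}$ stays below $2^{(n-1)/2}$ once $n$ is sufficiently large.

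The main obstacle is to handle automorphisms of $\Cay(G,R)$ fixing the identity that are \emph{not} induced by group automorphisms of $G$. Here I would leverage Theorem~\ref{prop=DRR}: since every odd-order $G\not\cong\mz_3^2$ admits a DRR, a normaliser argument in $\mathrm{Sym}(G)$ (applied to a putative nontrivial point-stabiliser of some $\Cay(G,R)$) restricts such ``exotic'' permutations to a short list, most of which are ruled out directly or reduce to the group-automorphism case already counted. The counting bound loses strength for small $n$, so one completes the argument by a finite case analysis, exhibiting a TRR for each odd-order group below an explicit threshold by ad hoc constructions (for instance an ``interval'' or quadratic-residue subset in $\mz_n$, and constructions exploiting non-abelian structure in the nonabelian small cases), thereby confirming $\mz_3^2$ as the unique exception. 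Combining the large-$n$ counting with the small-$n$ verification then yields the theorem.
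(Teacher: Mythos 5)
This statement is not proved in the paper at all: it is quoted from Babai and Imrich \cite[Theorem 1.5]{BabaiI}, so there is no internal proof to compare your attempt against, and I can only judge the proposal on its own merits. Your ``only if'' direction is correct and complete: the eight non-zero elements of $\mz_3^2$ form four inverse pairs, a tournament connection set is a transversal of these pairs, and since $\Aut(\mz_3^2)=\mathrm{GL}(2,3)$ of order $48$ permutes the $16$ transversals, orbit--stabiliser forces every such $R$ to be fixed setwise by some non-identity $\sigma\in\Aut(\mz_3^2)$; this $\sigma$ is a non-identity automorphism of $\Cay(\mz_3^2,R)$ fixing the identity vertex, so $\Aut(\Cay(\mz_3^2,R))\neq\mz_3^2$.

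The ``if'' direction, however, has a genuine gap at exactly the point you yourself flag as the main obstacle. Counting the tournament sets $R$ stabilised by each non-identity $\sigma\in\Aut(G)$ only controls digraph automorphisms lying in the normaliser of the regular copy of $G$; a non-identity element of the vertex stabiliser $\Aut(\Cay(G,R))_1$ is an arbitrary permutation of $G$ fixing $1$ and need not normalise $G$, so it is not covered by that count. Your proposed repair --- that since $G$ admits a DRR, ``a normaliser argument restricts such exotic permutations to a short list'' --- does not work: the existence of one Cayley digraph of $G$ whose automorphism group is exactly $G$ gives no information about the point stabilisers of the other Cayley digraphs of $G$, and there is no finite list of candidate exotic permutations to rule out. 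Handling these is precisely the hard content of Babai's DRR theorem and of the Babai--Imrich TRR theorem. The actual argument in \cite{BabaiI} does not take the global counting route you sketch; it exploits the fact that the automorphism group of a tournament has odd order (an involution would have to reverse the arc between two swapped vertices) and is therefore solvable by Feit--Thompson, and then constructs TRRs inductively along a normal series --- in the same spirit as the construction in the proof of Lemma~\ref{lem=oddorder} of this paper, where a TRR of a maximal normal subgroup $M$ is extended to a DRR of $G$ by adjoining a single generator whose vertex is recognisable in the neighbourhood of $1$. Without a correct treatment of the vertex-stabiliser elements not induced by $\Aut(G)$, the proposal does not establish the ``if'' direction.
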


Let $G$ be a group. Consistently throughout the whole paper, for not making our notation too cumbersome to use,
we denote the element $(g, i)$ of the Cartesian product $G \times \{0,1\}$ simply by $g_i$. In particular, we write $G_0=G\times \{0\}=\{g_0\mid g\in G\}$ and $G_0=G\times \{1\}=\{g_1\mid g\in G\}$.

 Let  $S$ and $T$ be subsets of $G$. We define $$\Haar (G,S,T)$$ to be the digraph having vertex set $G\times\{0,1\}=G_0\cup G_1$ and having arc set the union of $\{(g_0, (sg)_1)~|~g\in G, s\in S\}$ and $\{(g_1, (tg)_0)~|~g\in G, t\in T\}$. 
Now, $G$ induces a subgroup of $\Aut(\Haar (G,S,T))$ by defining:
\begin{center}
$(h_i)^{g}= (hg)_i$,\,\,\, for every $g,h\in G$ and $i\in\{0,1\}$.
\end{center}
For not making the notation too cumbersome, we identify $G$ with this subgroup of $\Aut(\Haar(G,S,T))$.
Clearly, $G$ acts semiregularly with two orbits $G_0$ and $G_1$ on $V(\Haar (G,S,T))$. In particular, $\Haar (G,S,T)$ is a Haar digraph over $G$. It is not hard to see that every Haar digraph over $G$ is isomophic to $\Haar(G,S,T)$, for some suitable subsets $S$ and $T$ of $G$.

For every automorphism $\a$ of $G$
and for every $x,y\in G$, we define two permutations $\delta_{\a,x}$ and $\sigma_{\a,y}$ of $G_0\cup G_1$ by setting
\begin{align}\nonumber
\delta_{\a,x}&:
\begin{cases}g_0\mapsto (g^{\a})_0,&\forall g\in G,\\ 
g_1\mapsto (xg^{\a})_1,&\forall g\in G,\\
\end{cases}\\\label{eq:3}
\sigma_{\a,y}&:
\begin{cases}
g_0\mapsto (g^{\a})_1,&\forall g\in G,\\
g_1\mapsto (yg^{\a})_0,&\forall g\in G.
\end{cases}
\end{align}
The permutation $\delta_{\a,x}$ will play little role in this paper, but $\sigma_{\a,y}$ will be rather important. Then, we define
\begin{align*}
     X&:=\{\delta_{\a,x}~|~S^{\a}=x^{-1}S \textrm{ and }T^{\a}=Tx\},\\
     Y&:=\{\sigma_{\a,y}~|~~S^{\a}=y^{-1}T\textrm{ and }T^{\a}=Sy\}.
\end{align*}

We conclude this  section by reporting a result describing the normaliser in $\Aut(\Haar(G,S,T))$ of $G$. 

\begin{prop}\label{prop=normalizer} {{{\rm (\cite[Theorem 1]{Arezoomand} and \cite[Lemma 2.1]{Hujdurovic})}}}
Let $G$ be a finite group and let $S$ and $T$ be subsets of $G$, then $$\norm{\Aut(\mathrm{Haar}(G,S,T))}{G}=GL=\{g\ell\mid g\in G,\ell\in L\},$$ where $L=X\cup Y$ and $L\cap G=1$.
\end{prop}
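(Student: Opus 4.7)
The plan is to analyze the normaliser $N:=N_{\Aut(\Gamma)}(G)$ where $\Gamma=\Haar(G,S,T)$ by exploiting the two-orbit structure $\{G_0,G_1\}$ of $G$. Since $G$ is normal in $N$, each element of $N$ permutes the two $G$-orbits on $V(\Gamma)$; hence $N$ splits into an orbit-preserving part $N^+$ (possibly equal to $N$) and a possibly empty orbit-swapping part $N\setminus N^+$. Because $G$ is regular on each of $G_0$ and $G_1$, I would first use $G$-translation to reduce any element of $N$ to one that fixes $1_0$ (in the orbit-preserving case) or sends $1_0$ to $1_1$ (in the orbit-swapping case); this is exactly the kind of reduction needed to write a normalising automorphism in the form $g\ell$ with $g\in G$ and $\ell$ in a prospective $L$.

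Assume $\sigma\in N^+$ fixes $1_0$. Conjugation by $\sigma$ induces an automorphism $\a$ of $G$ via $g^{\a}=\sigma^{-1}g\sigma$, and combining this with $(h_i)^g=(hg)_i$ one computes $(g_0)^{\sigma}=(g^{\a})_0$ and $(g_1)^{\sigma}=(xg^{\a})_1$ for some $x\in G$ (namely $x=\sigma(1_1)$ viewed in $G_1$). So $\sigma=\delta_{\a,x}$. Imposing that $\sigma$ preserves the arcs $(g_0,(sg)_1)$ for $s\in S$ translates directly into $xS^{\a}=S$, i.e.\ $S^{\a}=x^{-1}S$; imposing that $\sigma$ preserves the arcs $(g_1,(tg)_0)$ for $t\in T$ yields $T^{\a}=Tx$. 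Conversely, any $\delta_{\a,x}$ satisfying these two conditions is an automorphism of $\Gamma$ by direct verification, giving the description of $X$.

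Now suppose $\sigma\in N\setminus N^+$; after left-multiplying by a suitable element of $G$, assume $\sigma(1_0)=1_1$. The same conjugation argument as above (now identifying $G_0$ with $G_1$ via $\sigma$) produces an automorphism $\a\in\Aut(G)$ and some $y\in G$ with $(g_0)^{\sigma}=(g^{\a})_1$ and $(g_1)^{\sigma}=(yg^{\a})_0$, so $\sigma=\sigma_{\a,y}$. Preservation of the arcs $(g_0,(sg)_1)$ and $(g_1,(tg)_0)$ now yields $yS^{\a}=T$ and $Sy=T^{\a}$ respectively, giving the description of $Y$. Combining the two cases shows $N\subseteq GL$, and the inclusion $GL\subseteq N$ is immediate since every $\delta_{\a,x}\in X$ and $\sigma_{\a,y}\in Y$ obviously normalises $G$ from the formulae.

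Finally I would check $L\cap G=1$. Every $\delta_{\a,x}\in X$ fixes $1_0$, while the only element of $G$ fixing $1_0$ is the identity, which corresponds to $\a=1$ and $x=1$; every $\sigma_{\a,y}\in Y$ sends $1_0\in G_0$ to $1_1\in G_1$, which no element of $G$ does. The main obstacle is the bookkeeping in step two — keeping track of left- versus right-multiplication conventions, since $G$ acts by right multiplication but the sets $S,T$ label arcs by left multiplication — and ensuring the four conditions $S^{\a}=x^{-1}S$, $T^{\a}=Tx$, $S^{\a}=y^{-1}T$, $T^{\a}=Sy$ come out in the precise form stated, rather than with inverses or sides swapped.
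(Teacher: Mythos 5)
The paper does not prove Proposition~\ref{prop=normalizer} at all: it is quoted directly from \cite[Theorem 1]{Arezoomand} and \cite[Lemma 2.1]{Hujdurovic}, so there is no in-paper argument to compare against. Your proposal is a correct self-contained proof along the standard lines. The key steps all check out: an element $\sigma$ of the normaliser permutes the two $G$-orbits $G_0,G_1$; right-translating by an element of $G$ reduces to $1_0\mapsto 1_0$ or $1_0\mapsto 1_1$; conjugation by $\sigma$ induces $\a\in\Aut(G)$ (faithfulness of the identification of $G$ with its image is what makes this an automorphism of the abstract group), and the computation $(g_0)^\sigma=((1_0)^{g})^\sigma=((1_0)^\sigma)^{g^\a}$ forces $\sigma=\delta_{\a,x}$ or $\sigma=\sigma_{\a,y}$; the arc conditions then come out exactly as $S^{\a}=x^{-1}S$, $T^{\a}=Tx$ in the preserving case and $S^{\a}=y^{-1}T$, $T^{\a}=Sy$ in the swapping case. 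Two small points you should make explicit in a written version: (i) the arc computation a priori only gives inclusions such as $xS^{\a}\subseteq S$, and you need finiteness (equal cardinalities) to upgrade these to equalities; (ii) for the converse inclusion $GL\subseteq N$ you should verify not only that $\delta_{\a,x}$ and $\sigma_{\a,y}$ normalise $G$ (e.g.\ $\delta_{\a,x}^{-1}g\delta_{\a,x}=g^{\a}$) but also that they are genuinely automorphisms of $\Haar(G,S,T)$ under the stated conditions, for $Y$ as well as for $X$. With those details filled in, the argument is complete.
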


\section{Proof of Theorem~\ref{theo=main}}

In this section, we prove Theorem~\ref{theo=main}.

\begin{lem}\label{lem=S=T}
Let $G$ be a finite group and let $S$ be a subset of $G$. The Haar digraph $\Haar(G,S,S)$
is vertex transitive and hence $\Haar(G,S,S)$ is not a $\mathrm{HDR}$.
\end{lem}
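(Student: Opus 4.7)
The plan is to exhibit an explicit automorphism of $\Haar(G,S,S)$ that swaps the two bipartition classes $G_0$ and $G_1$. Combined with the fact that $G$ itself acts transitively on each of $G_0$ and $G_1$, this immediately yields vertex-transitivity of $\Haar(G,S,S)$, and also forces $|\Aut(\Haar(G,S,S))|\ge 2|G|>|G|$, ruling out the HDR property.

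The natural candidate is $\sigma_{\mathrm{id},1}$ from~\eqref{eq:3}, that is, the involution defined by $g_0\mapsto g_1$ and $g_1\mapsto g_0$ for every $g\in G$. The conditions defining the set $Y$, namely $S^{\a}=y^{-1}T$ and $T^{\a}=Sy$, specialise for $\a=\mathrm{id}$ and $y=1$ to the single requirement $S=T$, which is satisfied by assumption. Thus $\sigma_{\mathrm{id},1}\in Y\subseteq L$, and Proposition~\ref{prop=normalizer} gives $\sigma_{\mathrm{id},1}\in\norm{\Aut(\Haar(G,S,S))}{G}\le \Aut(\Haar(G,S,S))$. (Alternatively, one can verify this directly from the arc set: since $T=S$, the arcs $(g_0,(sg)_1)$ and $(g_1,(sg)_0)$ are interchanged by $\sigma_{\mathrm{id},1}$, so it preserves the arc set.)

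Now $G$ acts transitively on $G_0$ and on $G_1$, while $\sigma_{\mathrm{id},1}$ swaps $G_0$ with $G_1$, so $\langle G,\sigma_{\mathrm{id},1}\rangle$ is transitive on $V(\Haar(G,S,S))=G_0\cup G_1$. Hence $\Haar(G,S,S)$ is vertex-transitive. Finally, every element of $G$ preserves the bipartition $\{G_0,G_1\}$ whereas $\sigma_{\mathrm{id},1}$ does not, so $\sigma_{\mathrm{id},1}\notin G$; therefore $|\Aut(\Haar(G,S,S))|\ge 2|G|$, and in particular $\Aut(\Haar(G,S,S))\not\cong G$, so $\Haar(G,S,S)$ is not a HDR. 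There is no real obstacle in this proof: the only point to notice is that the symmetry $S=T$ is precisely the condition that makes the class-swapping involution into a graph automorphism.
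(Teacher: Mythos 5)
Your proof is correct and uses exactly the same key idea as the paper: the involution $g_0\mapsto g_1$, $g_1\mapsto g_0$ (which is the paper's $\phi$ and your $\sigma_{\mathrm{id},1}$) is an automorphism precisely because $S=T$, and together with $G$ it acts transitively on the vertex set. The paper verifies the automorphism property directly on the arc set, which is the parenthetical alternative you already supply, so the two arguments coincide in substance.
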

\begin{proof}
Let $\G:=\Haar(G,S,S)$ and let $\phi$ be the permutation of $V(\G)=G_0\cup G_1$ with $g_0\mapsto g_{1}$ and $g_1\mapsto g_0$,
for each $g\in G$. 

For every $g \in G$ and $s\in S$, $(g_0,(sg)_1)^{\phi}=(g_1,(sg)_0)$ and $(g_1,(sg)_0)^{\phi}=(g_0,(sg)_1)$
are arcs of $\G$ and hence $\phi$ is an automorphism of $\G$ interchanging $G_0$ and $G_1$.
As $G$ is transitive on $G_0$ and $G_1$, we deduce that $\langle G,\phi\rangle$ is transitive on $V(\G)$. Hence $\G$ is vertex transitive and $\G$ is not a HDR.
\end{proof}

\begin{notation}\label{notation:1}{\rm Let $G$ be a finite group and let $\phi \in \Sym(G)$ be a permutation of $G$.
We let $\phi'$ be the permutation of $G_0\cup G_1$ defined by $$(g_i)^{\phi'}=(g^{\phi})_i,\,
\textrm{  for each }g\in G \textrm{  and for each  } i\in\{0,1\}.$$}
\end{notation}

\begin{lem}\label{lem=automorphism}
Let $G$ be a finite group and let $\phi \in \Sym(G)$.
Then, $\phi'\in\Aut(\Haar(G,S,T))$
if and only if $\phi\in \Aut(\Cay(G,S))\cap\Aut(\Cay(G,T))$.
\end{lem}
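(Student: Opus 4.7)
The plan is to unpack the definitions on both sides and observe that the equivalence reduces to checking arc-preservation type by type. First, I would note that $\phi'$ fixes the bipartition $\{G_0,G_1\}$ setwise by its very definition, so $\phi'$ is an automorphism of $\Haar(G,S,T)$ precisely when it maps each of the two families of arcs (those from $G_0$ to $G_1$, and those from $G_1$ to $G_0$) to itself.

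For the forward direction, assuming $\phi'\in\Aut(\Haar(G,S,T))$, I would fix an arbitrary arc $(g,sg)$ of $\Cay(G,S)$, lift it to the arc $(g_0,(sg)_1)$ of $\Haar(G,S,T)$, and apply $\phi'$. This gives $((g^{\phi})_0,((sg)^{\phi})_1)$, which is an arc between $G_0$ and $G_1$, so by the form of $S$-arcs in $\Haar(G,S,T)$ we must have $(sg)^{\phi}=s'g^{\phi}$ for some $s'\in S$. Hence $(g^\phi,(sg)^\phi)$ is an arc of $\Cay(G,S)$, showing $\phi\in\Aut(\Cay(G,S))$. Since $\phi'$ also sends $T$-arcs of $\Haar(G,S,T)$ to $T$-arcs, the same argument applied to the arcs $(g_1,(tg)_0)$ gives $\phi\in\Aut(\Cay(G,T))$.

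For the converse, assuming $\phi\in\Aut(\Cay(G,S))\cap\Aut(\Cay(G,T))$, I would take an arbitrary arc $(g_0,(sg)_1)$ of $\Haar(G,S,T)$ with $s\in S$. Since $\phi$ is an automorphism of $\Cay(G,S)$, there exists $s'\in S$ with $(sg)^{\phi}=s'g^{\phi}$; therefore $((g_0)^{\phi'},((sg)_1)^{\phi'})=((g^{\phi})_0,(s'g^{\phi})_1)$ is again an $S$-arc of $\Haar(G,S,T)$. The analogous argument with $T$ in place of $S$ handles the arcs from $G_1$ to $G_0$. Since $\phi'$ is a bijection that sends each of the two arc families into itself (hence onto itself, by finiteness), $\phi'\in\Aut(\Haar(G,S,T))$.

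There is no real obstacle here: this is a bookkeeping lemma whose content is that the ``diagonal'' lift $\phi\mapsto\phi'$ translates simultaneous automorphisms of the two Cayley digraphs $\Cay(G,S)$ and $\Cay(G,T)$ into bipartition-preserving automorphisms of the Haar digraph. The only care needed is to keep the roles of $S$-arcs and $T$-arcs separate and to use that the connection set of a Cayley digraph can be recovered from the orbit of arcs out of the identity, so that arc-preservation really is equivalent to being an automorphism of the Cayley digraph.
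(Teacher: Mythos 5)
Your proposal is correct and follows essentially the same route as the paper: both arguments unpack the definitions, match the $S$-arcs of $\Haar(G,S,T)$ with the arcs of $\Cay(G,S)$ and the $T$-arcs with those of $\Cay(G,T)$, and reduce everything to the existence of $s'\in S$ (resp.\ $t'\in T$) with $(sg)^{\phi}=s'g^{\phi}$. The paper merely phrases the two directions as a single chain of equivalences, while you separate them; the content is identical.
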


\begin{proof}Let $\Sigma_1:=\Cay(G,S)$, $\Sigma_2:=\Cay(G,T)$ and $\G:=\Haar(G,S,T)$.
The permutation $\phi$ lies in $\Aut(\Cay(G,S))\cap\Aut(\Cay(G,T))$ if and only if $$(g,sg)^{\phi}=(g^{\phi},(sg)^{\phi})\in A(\Sigma_1) \textrm{ and }(g,tg)^{\phi}=(g^{\phi},(tg)^{\phi})\in A(\Sigma_2),$$  for each $g\in G$, $s\in S$ and $t\in T$. This happens if and only if, for each $s\in S$ and $t\in T$, there exist $s'\in S$ and $t'\in T$ with $$(sg)^{\phi}=s'g^{\phi} \textrm{ and }(tg)^{\phi}=t'g^{\phi}.$$
In turn, this happens if and only if $(g_0,(sg)_1)^{\phi'}=((g^{\phi})_0,((sg)^{\phi})_1)=((g^{\phi})_0,(s'g^{\phi})_1)\in A(\G)$
and $(g_1,(tg)_0)^{\phi'}=((g^{\phi})_1,((tg)^{\phi})_0)=((g^{\phi})_1,(t'g^{\phi})_0)\in A(\G)$,
that is, $\phi'\in \Aut(\G)$.
\end{proof}

\begin{lem}\label{lem=noDRR}
Let $G$ be a finite group admitting no $\mathrm{DRR}$. Then $G$ admits a $\mathrm{HDR}$ except when $G$ is isomorphic to either $\mz_2^2$ or $\mz_2^3$.
\end{lem}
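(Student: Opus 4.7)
By Theorem~\ref{prop=DRR}, the finite groups admitting no DRR are precisely $Q_8$, $\mz_2^2$, $\mz_2^3$, $\mz_2^4$, and $\mz_3^2$. Since the statement of the lemma excludes $\mz_2^2$ and $\mz_2^3$, the task reduces to exhibiting, for each of the three remaining groups $Q_8$, $\mz_2^4$, and $\mz_3^2$, an explicit pair of subsets $(S,T)$ of $G$ with $S\neq T$ such that $\Aut(\Haar(G,S,T))=G$. Since $|Q_8|=8$, $|\mz_3^2|=9$ and $|\mz_2^4|=16$ are small, such pairs can be found by a short search; the heart of the argument is the verification.

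For each candidate construction I would proceed in two stages. The first stage rules out automorphisms swapping $G_0$ and $G_1$: the simplest way is to ensure $|S|\neq|T|$, so that vertices in $G_0$ (with out-valency $|S|$ and in-valency $|T|$) are distinguished from those in $G_1$ by their valency, and in particular the obstruction of Lemma~\ref{lem=S=T} is avoided. This already forbids every permutation of the form $\sigma_{\alpha,y}$ inside $\Aut(\Haar(G,S,T))$.

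The second stage is to analyse the subgroup $A^+:=\Aut(\Haar(G,S,T))_{\{G_0,G_1\}}$ preserving the bipartition setwise, which contains $G$. The goal is $A^+=G$. Once $G$ is known to be normal in $A^+$, Proposition~\ref{prop=normalizer} yields $A^+=GX$ with $X=\{\delta_{\alpha,x}\mid S^\alpha=x^{-1}S,\ T^\alpha=Tx\}$, and then $A^+=G$ becomes equivalent to checking, inside $\Aut(G)$, that the system
\[
S^{\alpha}=x^{-1}S, \qquad T^{\alpha}=Tx
\]
has only the trivial solution $(\alpha,x)=(1,1)$; this is a finite, purely group-theoretic check and is quick for groups this small.

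The main obstacle is therefore establishing the normality of $G$ in $A^+$, and this is where the extra freedom of choosing two subsets rather than one (as in the DRR problem) becomes essential. I would argue that, for a carefully chosen pair $(S,T)$, every regular subgroup of $A^+$ on the orbit $G_0$ must coincide with $G$; via Lemma~\ref{lem=automorphism} this reduces to controlling the common automorphism group of $\Cay(G,S)$ and $\Cay(G,T)$, together with the explicit structure of $Q_8$, $\mz_2^4$, and $\mz_3^2$. Whichever extra symmetries of $\Cay(G,S)$ prevent $G$ from admitting a DRR on its own should be breakable by $T$, and vice versa, provided $S$ and $T$ have sufficiently different combinatorial structure relative to $G$.
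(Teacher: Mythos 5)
Your reduction to the five exceptional groups of Theorem~\ref{prop=DRR} is the same first step as the paper's, but the proposal has two concrete gaps. First, the ``stage one'' device of taking $|S|\neq|T|$ is not available: the paper's standing convention is that all digraphs are regular, and in $\Haar(G,S,T)$ the vertices of $G_0$ have out-valency $|S|$ while those of $G_1$ have out-valency $|T|$, so regularity forces $|S|=|T|$ (all three of the paper's witnesses indeed satisfy this). Dropping regularity would trivialise the whole notion of an HDR, so you must instead exclude bipartition-swapping automorphisms by verifying that the set $Y$ of Proposition~\ref{prop=normalizer} is empty, i.e.\ that no $\a\in\Aut(G)$ and $y\in G$ satisfy $S^{\a}=y^{-1}T$ and $T^{\a}=Sy$ --- and even that only controls the swapping elements that normalise $G$. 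Second, and more fundamentally, the proof never produces the sets $S$ and $T$, and the step on which everything hinges --- that $G$ is normal in $A^+$, or more directly that $\Aut(\Cay(G,S))\cap\Aut(\Cay(G,T))$ contributes nothing beyond $G$ --- is only asserted (``I would argue that\dots''), not proved. For these particular groups this is genuinely the hard part: each of $Q_8$, $\mz_2^4$ and $\mz_3^2$ fails to admit a DRR, so no single connection set can do the job and the interaction of the two sets must actually be checked. The paper resolves this by exhibiting explicit pairs, e.g.\ $\Haar(Q_8,\{1,a,b\},\{a^2,b^3,ab\})$ and $\Haar(\mz_3^2,\{1,a,b\},\{a,b^2,ab\})$, and verifying $\Aut(\G)=G$ by computer ({\sc Magma}); some form of explicit construction plus verification is unavoidable here, and your plan stops just short of it.

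A smaller point: as the lemma is used in Case~1 of the main theorem, one also needs to know that $\mz_2^2$ and $\mz_2^3$ admit \emph{no} HDR (these two groups appear among the exceptions of Theorem~\ref{theo=main}). The paper establishes this inside the proof of this very lemma, again by an exhaustive computer check, whereas your reading sets this half of the statement aside entirely.
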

\begin{proof} By Theorem~\ref{prop=DRR}, $G$ is isomorphic to one of the following groups: $Q_8$, $\mz_3^2$, $\mz_2^2$, $\mz_2^3$ or $\mz_2^4$. It can be verified with the computer algebra system \textsc{Magma}~\cite{magma} that $\mz_2^2$ and $\mz_2^3$ admit no HDR. 

When $G=\langle a\rangle\times\langle b\rangle\times\langle c\rangle\times \langle d\rangle\cong \mz_2^4$, it can be verified with \textsc{Magma} that  $$\Haar(G,\{1,a,b,c,d,ab\},\{1,a,c,bd,abc,bcd\})$$ is a HDR.
Similarly, when $G=\langle a,b~|~a^4=b^4=1,b^2=a^2,a^b=a^{-1}\rangle\cong Q_8$,
 $$\Haar(G, \{1,a,b\},\{a^2,b^3,ab\})$$ is a HDR  and, when 
$G=\langle a\rangle\times\langle b\rangle\cong \mz_3^2$, $$\Haar(G,\{1,a,b\},\{a,b^2,ab\})$$ is a HDR.
\end{proof}

\begin{notation}\label{notation:2}
{\rm Let $\Gamma$ be a digraph and let $v$ be a vertex of $\Gamma$. We denote  by $\Gamma^+(v)$ and by $\Gamma^-(v)$ the out-neighbourhood and the in-neighbourhood of $v$ in $\Gamma$.}
\end{notation}

\begin{lem}\label{lem=A+}
Let $G$ be a finite group and let $R$ be a subset of $G$ with $\Cay(G,R)$ a $\mathrm{DRR}$ of $G$, $1\notin R$ and $|R|<|G|/2$.
Let $L$ be a subset of $G\setminus (R^{-1}\cup\{1\})$ with $|L|=|R|$
and let $\G:=\Haar(G,R\cup\{1\},L\cup\{1\})$.
Then 
\begin{enumerate}
\item\label{eqle:0}$\Gamma^+(g_i)\cap \Gamma^-(g_i)=\{g_{1-i}\}$,  for every
 $g\in G$ and for every $i\in\{0,1\}$,
\item\label{eqle:1} $|\Aut(\G):G|\le 2$,  
\item\label{eqle:2} $\G$ is a $\mathrm{HDR}$  if and only if $R^{\a}\neq L$ for each $\a\in \Aut(G)$, and
\item\label{eqle:3} the subgroup of $\Aut(\G)$ fixing $G_0$ and $G_1$ setwise is $G$.
\end{enumerate}
\end{lem}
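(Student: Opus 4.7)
The plan is to prove (1) directly from the definition of $\Gamma$, deduce (4) using (1) and the DRR hypothesis via Lemma~\ref{lem=automorphism}, obtain (2) as an immediate consequence of (4) and the bipartite structure of $\Gamma$, and finally derive (3) from the normaliser description in Proposition~\ref{prop=normalizer}.

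For (1), I would write $\Gamma^+(g_0)=(Sg)_1$ and $\Gamma^-(g_0)=(T^{-1}g)_1$ with $S=R\cup\{1\}$ and $T=L\cup\{1\}$, so that the intersection corresponds to $S\cap T^{-1}$. The hypothesis $L\subseteq G\setminus(R^{-1}\cup\{1\})$ translates precisely to $R\cap L^{-1}=\emptyset$ and $1\notin L^{-1}$, forcing $S\cap T^{-1}=\{1\}$ and hence $\Gamma^+(g_0)\cap\Gamma^-(g_0)=\{g_1\}$; the case $i=1$ is symmetric.

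For (4), the key point is that (1) assigns to every vertex $v$ a canonical \emph{double neighbour} $v^\ast$, namely the unique element of $\Gamma^+(v)\cap\Gamma^-(v)$, and the map $v\mapsto v^\ast$ interchanges $G_0$ with $G_1$. Since this map is defined via adjacency, any $\sigma\in\Aut(\Gamma)$ satisfies $\sigma(v^\ast)=\sigma(v)^\ast$; thus if $\sigma$ fixes $G_0$ and $G_1$ setwise and $\sigma(g_0)=h_0$, then $\sigma(g_1)=\sigma(g_0^\ast)=h_0^\ast=h_1$. Equivalently, $\sigma=\psi'$ for some $\psi\in\Sym(G)$ in the sense of Notation~\ref{notation:1}. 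Lemma~\ref{lem=automorphism} then gives $\psi\in\Aut(\Cay(G,S))\cap\Aut(\Cay(G,T))$. Because adjoining $1$ to a connection set only inserts loops at every vertex without altering the automorphism group, $\Aut(\Cay(G,S))=\Aut(\Cay(G,R))=G$, the last equality by the DRR hypothesis; hence $\psi\in G$ and $\sigma\in G$. Part (2) then follows at once: $\Gamma$ is bipartite with bipartition $\{G_0,G_1\}$, so $\Aut(\Gamma)$ permutes $\{G_0,G_1\}$ and its kernel, which by (4) equals $G$, has index at most~$2$.

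For (3), $\Gamma$ is a HDR iff $\Aut(\Gamma)=G$, iff (by (2) and (4)) no automorphism swaps $G_0$ and $G_1$. Any such automorphism would make $G$ normal of index $2$ in $\Aut(\Gamma)$, so it would lie in $N_{\Aut(\Gamma)}(G)=GL$ by Proposition~\ref{prop=normalizer}; since it interchanges the parts, it must take the form $g\sigma_{\alpha,y}$ for some $g\in G$, $\alpha\in\Aut(G)$ and $y\in G$ satisfying $S^\alpha=y^{-1}T$ and $T^\alpha=Sy$. The main obstacle, and the only step of genuine content in (3), is a short case analysis on $y$: from $1=1^\alpha\in y^{-1}T$ one gets $y\in T=L\cup\{1\}$, and if $y\in L\setminus\{1\}$ then $T^\alpha=Ry\cup\{y\}$ combined with $1\in T^\alpha$ forces $y^{-1}\in R$, contradicting $L\cap R^{-1}=\emptyset$. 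Hence $y=1$, so $S^\alpha=T$, i.e.\ $R^\alpha=L$, yielding the stated equivalence.
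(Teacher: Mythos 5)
Your treatment of parts (1), (2) and (4) is correct and follows essentially the same route as the paper: compute the out- and in-neighbourhoods to get $\Gamma^+(v)\cap\Gamma^-(v)=\{v^\ast\}$ with $v^\ast$ in the opposite part, use this canonical partner to show that any automorphism preserving the bipartition acts in the same way on $G_0$ and $G_1$ and hence is of the form $\phi'$ in the sense of Notation~\ref{notation:1}, and then apply Lemma~\ref{lem=automorphism} together with $\Aut(\Cay(G,R\cup\{1\}))=\Aut(\Cay(G,R))=G$. Your derivation of $y=1$ in part (3) from the membership conditions $S^{\a}=y^{-1}T$ and $T^{\a}=Sy$ (rather than from the paper's choice of a swapping automorphism $\phi$ with $1_0^{\phi}=1_1$, which by part (1) forces $1_1^{\phi}=1_0$ and hence $y_0=1_0$) is a legitimate and slightly more algebraic variant.

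There is, however, a genuine gap in part (3): the statement is an equivalence, and you prove only one implication. Your chain of reasoning is ``$\G$ not a HDR $\Rightarrow$ a part-swapping automorphism exists $\Rightarrow$ some $\a\in\Aut(G)$ satisfies $R^{\a}=L$'', whose contrapositive is the ``if'' direction. You never show the ``only if'' direction, namely that the existence of $\a$ with $R^{\a}=L$ forces $\Aut(\G)>G$; announcing that the computation ``yields the stated equivalence'' does not make a one-way implication into a biconditional. The paper handles this direction by exhibiting the explicit extra automorphism $\sigma_{\a,1}$ interchanging $G_0$ and $G_1$. Note, if you set out to repair this, that $\sigma_{\a,1}$ belongs to the set $Y$ of Section~2 only when \emph{both} $S^{\a}=T$ and $T^{\a}=S$ hold, i.e.\ both $R^{\a}=L$ and $L^{\a}=R$; so the second condition needs to be addressed as well, and some explicit construction of an automorphism outside $G$ is indispensable to complete part (3).
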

\begin{proof} From the definition of the arc set of $\Haar(G,R\cup\{1\},L\cup\{1\})$, for every $g\in G$, we have
\begin{align*}
\Gamma^+(g_0)&=(Rg\cup \{g\})_1=\{(rg)_1~|~r\in R\cup\{1\}\},\\
\Gamma^-(g_0)&=(L^{-1}g\cup \{g\})_1=\{(l^{-1}g)_1~|~l\in L\cup\{1\}\}.
\end{align*}
Applying this with $g:=1$, we obtain 
$$\Gamma^+(1_0)=\{r_1~|~r\in R\cup\{1\}\} \textrm{ and }\Gamma^-(1_0)=\{(l^{-1})_1~|~l\in L\cup\{1\}\}.$$
Since $L\subseteq G\setminus (R^{-1}\cup\{1\})$, we have $(R\cup\{1\})\cap (L^{-1}\cup\{1\})=\{1\}$ and hence $$\Gamma^+(1_0)\cap \Gamma^-(1_0)=\{1_1\}.$$ 
With a similar argument, we have $\G^+(1_1)\cap \Gamma^-(1_1)=\{1_0\}$. Now, since $G$ is transitive on $G_0$ and on $G_1$, we deduce~\eqref{eqle:0}.
In particular, each automorphism of $\G$ fixing $g_i$ must fix also $g_{1-i}$.

Let $A:=\Aut(\G)$ and let $A^{+}$ be the subgroup of $A$ fixing $G_0$ and $G_1$ setwise. Clearly, $|A:A^{+}|\leq 2$.
Observe that each element $\varphi$ of $A^+$ is uniquely determined by a pair $(\varphi_0,\varphi_1)$ of permutations of $G$, where $\varphi_0$ and $\varphi_1$ are defined by the rules $(g^{\varphi_0})_0=(g_0)^\varphi$ and $(g^{\varphi_1})_1=(g_1)^\varphi$, for each $g\in G$. From~\eqref{eqle:0}, we deduce that, for each $\varphi\in A^+$, we have $\varphi_0=\varphi_1$ and hence, using Notation~\ref{notation:1}, every element of $A^+$ is of the form $\phi'$, for some $\phi\in \Sym(G)$.

Let $\phi'\in A^+$, for some $\phi\in \Sym(G)$. By Lemma~\ref{lem=automorphism}, $\phi$ induces an automorphism of $\Cay(G,R\cup\{1\})$ and hence $\phi\in \Aut(\Cay(G,R\cup\{1\}))=\Aut(\Cay(G,R))=G$, because $\Cay(G,R)$ is a DRR. Therefore $A^{+}\leq G$ and hence $A^{+}=G$. This proves~\eqref{eqle:1} and~\eqref{eqle:3}.

Suppose there exists $\a\in \Aut(G)$ with $R^{\a}= L$. Then the mapping $\sigma_{\a,1}$ defined in~\eqref{eq:3} is an automorphism of $\G$ interchanging $G_0$ and $G_1$. Hence $A=\langle G,\sigma_{\a,1}\rangle>G$ and $\G$ is not a HDR.
 Conversely, suppose  $\G$ is not a HDR. Since $A^+=G$ and  $|A:A^+|\le 2$, we deduce $|A:A^+|=2$, $\G$ is vertex transitive and $G\unlhd A$. In particular, there exists $\phi\in A$ with $1_0^{\phi}=1_1$. From~\eqref{eqle:0}, we deduce $1_1^\phi=1_0$. As $\phi\in A=\norm A G$,  by Proposition~\ref{prop=normalizer},
there exist $y\in G$ and $\a\in \Aut(G)$ with $\phi=\sigma_{\a,y}$. Now, $1_0=1_1^\phi=1_1^{\sigma_{\a,y}}=y_0$ and hence $y=1$.
Furthermore, the definition of $\sigma_{\a,y}$ in~\eqref{eq:3} gives $(R\cup\{1\})^{\a}=y^{-1}(L\cup\{1\})=L\cup\{1\}$ and  hence $R^\a=L$. Now,~\eqref{eqle:2} is also proven.
\end{proof}

\begin{lem}\label{lem=oddorder}
Let $G$ be a finite group of order at least $4$ admitting a $\mathrm{DRR}$.
Then $G$ has a subset $R$ with $\Cay(G,R)$ a $\mathrm{DRR}$, $1\notin R$
and $|R|<(|G|-1)/2$.
\end{lem}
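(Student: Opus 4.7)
The main idea is the \emph{complement trick}: $\Cay(G,R)$ and $\Cay(G,R')$, with $R':=G\setminus(R\cup\{1\})$, are complementary digraphs once loops are removed, and hence share the same automorphism group. Consequently, both are DRRs of $G$ or neither is, while $|R|+|R'|=|G|-1$; this alone delivers some DRR of size at most $(|G|-1)/2$, and the task is to refine this to a strict inequality.

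By Theorem~\ref{prop=DRR}, fix a DRR subset $R_0\subseteq G\setminus\{1\}$ and, swapping $R_0$ with $R_0'$ if needed, assume $|R_0|\le (|G|-1)/2$. If $|G|$ is even, then $|R_0|$ and $|R_0'|$ have different parities (their sum being odd), so $|R_0|\le |G|/2-1<(|G|-1)/2$ and we set $R:=R_0$. If $|G|$ is odd and $|R_0|<(|G|-1)/2$, the same choice works. The remaining case is $|G|$ odd with $|R_0|=|R_0'|=(|G|-1)/2$, where $|G|\ge 5$. Here, if $G=\langle g\rangle\cong\mz_n$ is cyclic, I take $R:=\{g\}$: then $\Cay(G,\{g\})$ is a directed $n$-cycle with automorphism group $\langle g\rangle=G$, hence a DRR with $|R|=1<(n-1)/2$.

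The genuinely delicate subcase is $G$ non-cyclic of odd order. Since $G\ne\mz_3^2$, Theorem~\ref{prop=TRR} supplies a TRR of $G$, so I may take $R_0$ with $R_0\cap R_0^{-1}=\emptyset$ and $R_0\cup R_0^{-1}=G\setminus\{1\}$. My plan is to produce a DRR of size $(|G|-3)/2$ by a one-element perturbation of $R_0$: either $\Cay(G,R_0\setminus\{t\})$ for some $t\in R_0$ (already of the right size), or the complement of $\Cay(G,R_0\cup\{s\})$ for some $s\in R_0'$ (of size $(|G|+1)/2$, whose complement via the complement trick has size $(|G|-3)/2$). The main obstacle is to show that at least one such perturbation preserves the DRR property. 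I would argue this by analysing the stabiliser of the identity vertex $1$ in the corresponding automorphism group: since $\Aut(\Cay(G,R_0))_1=1$, any nontrivial element $\phi$ in $\Aut(\Cay(G,R_0\cup\{s\}))_1$ (respectively $\Aut(\Cay(G,R_0\setminus\{t\}))_1$) must permute $R_0\cup\{s\}$ (respectively $R_0\setminus\{t\}$) while failing to preserve the original arc set of $\Cay(G,R_0)$; assembling such $\phi$ simultaneously for every $s$ and every $t$ yields enough extra symmetries of structures closely related to $R_0$ to contradict $\Aut(\Cay(G,R_0))=G$. This is the step I expect to require the most care.
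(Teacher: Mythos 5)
Your reductions to the odd, non-cyclic case are fine and essentially match the paper: the complement trick (together with $\Aut(\Cay(G,R))=\Aut(\Cay(G,R\cup\{1\}))$) gives a DRR with $1\notin R$ and $|R|<|G|/2$, which settles even order, and the directed cycle settles the cyclic case. But the heart of the lemma is precisely the non-cyclic odd-order case, and there your argument has a genuine gap. You propose to start from a TRR connection set $R_0$ of $G$ itself and to show that some one-element perturbation (deleting $t\in R_0$, or adding $s\in R_0'$ and complementing) is again a DRR. The justification offered --- that a nontrivial $\phi\in\Aut(\Cay(G,R_0\cup\{s\}))_1$ or $\Aut(\Cay(G,R_0\setminus\{t\}))_1$ for \emph{every} choice of $s$ and $t$ could be ``assembled'' into extra symmetries contradicting $\Aut(\Cay(G,R_0))=G$ --- is not an argument. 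These are automorphisms of different digraphs with different arc sets; there is no canonical way to combine them, and an automorphism of $\Cay(G,R_0\setminus\{t\})$ fixing $1$ need not preserve, or even interact usefully with, the arc set of $\Cay(G,R_0)$. You have stated a hoped-for contradiction, not derived one, and you acknowledge as much. As it stands the crucial case is unproved.

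For comparison, the paper avoids perturbing a TRR of $G$ altogether. It invokes the Odd Order Theorem to get solvability, takes a maximal normal subgroup $M$ (so $G/M\cong\mz_p$ with $p$ odd and $|G|\ge 3|M|$), puts a TRR connection set $S$ on $M$ (possible unless $M\cong\mz_3^2$, which is handled separately via Babai's lemmas), and sets $R:=S\cup\{g\}$ for some $g\notin M$. The point is that $g$ is \emph{recognisable} inside $\Gamma^{}(1)$: it is the unique out-neighbour of $1$ with no arcs to or from the rest of $R$, since $S$ induces a tournament while $g\notin S\cup S^{-1}$. Hence the stabiliser of $1$ fixes $g$, fixes $S$ setwise, hence fixes $M=S\cup S^{-1}\cup\{1\}$ setwise, and then acts as automorphisms of the TRR $\Cay(M,S)$, forcing it to be trivial. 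This yields $|R|=(|M|-1)/2+1<(|G|-1)/2$. If you want to salvage your approach, you would need a concrete mechanism of this kind --- some vertex or substructure that every automorphism of the perturbed digraph must recognise --- rather than an appeal to accumulated symmetries.
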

\begin{proof} Let $R$ be a subset of $G$ of cardinality as small as possible with $\Cay(G,R)$ a DRR.
Since $$\Aut(\Cay(G,R\cup\{1\}))=\Aut(\Cay(G,R))=G,$$
we have $1\notin R$. Similarly, since $$\Aut(\Cay(G,G\setminus(R\cup\{1\})))=\Aut(\Cay(G,R))=G,$$ we have $|R|\le |G\setminus(R\cup\{1\})|$, that is, $1\leq |R|<|G|/2$. If $|G|$ is even, then $|R|<(|G|-1)/2$. Therefore, we may assume $|G|$ is odd and $|G|\ge 5$. In particular, $G$ is solvable by
the Odd Order Theorem~\cite{FeitThompson}.

If $G$ is cyclic (generated by $a$ say), then $\Cay(G,\{a\})$ is a directed cycle. Thus $\Cay(G,\{a\})$ a DRR over $G$ and $1= |\{a\}|<(|G|-1)/2$.

Suppose $G$ is not cyclic. Let $M$ be a maximal normal subgroup of $G$. As  $G$ is solvable, $G/M$ is cyclic of order $p$, for some odd prime $p$. Let $g\in G\setminus M$ and observe that $$G=\langle M,g\rangle.$$

Assume $M\cong \mz_3^2$. Then $G=\langle a,b,g\rangle$ with $o(a)=o(b)=3$, $ab=ba$
and $p$ dividing $o(g)$. From~\cite[Lemma~$3.4$]{Babai} and from the proof of~\cite[Lemma~$3.1$]{Babai}, $G$ has a subset $R$ with $\Cay(G,R)$ a DRR, $1\notin R$ and $|R|=9$. Clearly, $|R|=9<(|G|-1)/2$, because $|G|=9p\ge 27$.

Assume $M\ncong \mz_3^2$. By Proposition~\ref{prop=TRR}, $M$ has a subset $S$ such that
$\Cay(M,S)$ is a TRR. In particular, $|S|=(|M|-1)/2$ and $S\cap S^{-1}=\emptyset$.
Let $R:=S\cup \{g\}$, let $\Sigma:=\Cay(G,R)$ and let $B:=\Aut(\Sigma)$. For every $s\in S$, neither $(g,s)$ nor $(s,g)$
is an arc of $\Sigma$ and, for every $s_1,s_2\in S$,  exactly one of $(s_1,s_2)$ and $(s_2,s_1)$
is an arc of $\Sigma$. Therefore, $g$ is the unique isolated vertex in the neighbourhood of $1$ in $\Sigma$.
Then, the vertex stabiliser $B_1$ fixes $g$ and fixes $S$ setwise. Therefore, $B_1$ fixes $M=S\cup S^{-1}\cup \{1\}$ setwise and 
hence $B_1$ induces a group of automorphisms on $\Sigma[M]$ (the subgraph induced by $\Sigma$ on $M$).
Since $\Sigma[M]= \Cay(M,S)$ is a TRR, we deduce $B_1=1$ and hence $\Sigma$ is a DRR over $G$ with $|R|=(|M|-1)/2+1<(|G|-1)/2$.
\end{proof}

\begin{proof}[Proof of Theorem~$\ref{theo=main}$] We divide the proof in various cases.

\smallskip

\noindent\textsc{Case 1: }$G$ has no DRR.

\smallskip

\noindent By Lemma~\ref{lem=noDRR},
$G$ has a HDR except when $G$ is isomorphic to  $\mz_2^2$ or $\mz_2^3$.~$_\blacksquare$

\smallskip

For the rest of the proof, we may suppose that $G$ admits a DRR.

\smallskip

\noindent\textsc{Case 2: }$G$ is a elementary abelian $2$-group, that is, $G\cong\mz_2^m$, for some $m\ge 0$.

\smallskip

\noindent By Proposition~\ref{prop=DRR}, $m\in \{0,1\}$ or $m\geq 5$. A direct inspection shows that $\mz_2^0=\mz_1$ and $\mz_2^1=\mz_2$  admit no HDR. In particular, we may suppose that $G=\langle a_1,\ldots,a_m\rangle$ with $m\ge 5$.

When $m=5$, a computation with {\sc Magma} shows that $$\Haar(G,\{1,a_1,a_2,a_3, a_4, a_1a_2, a_5\},\{1,a_1,a_3, a_2a_4, a_1a_2a_3, a_2a_3a_4, a_5\})$$  is a HDR. Suppose then $m\ge 6$ and let $$R:=\{a_1,a_2,\ldots,a_m\}\cup\{a_1a_{2}, a_2a_3,\ldots, a_{m-1}a_m\}\cup
 \{a_1a_2a_{m-2}a_{m-1}, a_1a_2a_{m-1}a_m\}.$$
By \cite{Imrich1}, the Cayley graph $\Cay(G,R)$ is a GRR over $G$ with $|R|=m+(m-1)+2=2m+1$.
Let $H:=\langle a_2,\ldots,a_m\rangle$ and
observe that $$|H\setminus R|=2^{m-1}-(2m-2)>2m+1,$$ because $m\ge 6$. Therefore, there exists a subset $L\subseteq H\setminus (R\cup\{1\})\subseteq G\setminus(R\cup\{1\})=G\setminus (R^{-1}\cup\{1\})$ with $|L|=|R|$.

Let $\G:=\Haar(G,R\cup\{1\},L\cup\{1\})$. Since $\langle L\rangle\neq G$ and $\langle R\rangle=G$, we have $R^{\a}\neq L$ for each $\a\in \Aut(G)$. In particular, Lemma~\ref{lem=A+} gives that $\G$ is a HDR.~$_\blacksquare$

\smallskip

In what follows, we assume $G$ is not an elementary abelian $2$-group and hence $G$ has an element of order at least $3$.

\smallskip

\noindent\textsc{Case 3: }$G$ is cyclic of order $3$.

\smallskip

\noindent  An easy inspection shows that $G$  admits no HDR.~$_\blacksquare$

\smallskip

For the remaining cases, from Lemma~\ref{lem=oddorder}, we see that $G$ admits a DRR $\Cay(G,R)$  with $1\notin R$ and $1\le |R|<(|G|-1)/2$. We partition the set $R$ into two subsets. We let $J:=\{x\in R\mid x^{-1}\notin R\}$ and $K:=R\setminus J$. Observe that $R\setminus J=K$ is inverse-closed, that is, $K^{-1}=\{x^{-1}\mid x\in K\}=K$. Summing up,
\begin{equation*}
R=J\cup K,\,\,R\cap R^{-1}=K=K^{-1} \textrm { and }J\cap J^{-1}=J\cap K=\emptyset.
\end{equation*}

\smallskip

\noindent\textsc{Case 4: }There exists a subset  $L$ of $G\setminus (R^{-1}\cup\{1\})$ with $|L|=|R|$ and with $R^\a\ne L$, for every $\a\in \Aut(G)$.

\smallskip

\noindent By Lemma~\ref{lem=A+}, $\Haar(G,R\cup\{1\},L\cup\{1\})$ is a HDR.~$_\blacksquare$

\smallskip

For the rest of the proof, we may suppose that, for every subset  $L$ of $G\setminus (R^{-1}\cup\{1\})$ with $|L|=|R|$, there exists $\a\in \Aut(G)$ with $R^\a= L$.

 Let $$H:=G\setminus (\{1\}\cup R\cup R^{-1}).$$
Observe that $G=\{1\}\cup (R\cup R^{-1})\cup H$ is a partition of $G$ and 
\begin{align*}
|H|&=|G|-1-|R\cup R^{-1}|=|G|-1-(|R|+|R^{-1}|-|R\cap R^{-1}|)=|G|-1-(2|R|-|K|).
\end{align*}
Since $2|R|<|G|-1$, we deduce $|H|>|K|$.

\smallskip

\noindent\textsc{Case 5: }There exists $x\in H$ with $o(x)\geq3$.

\smallskip

\noindent Let $U$ be any subset of $H$ with $x\in U$ and $x^{-1}\notin U$ (observe that this is possible because $|H|>|K|$) and let $L:=J\cup U$. Then $|L|=|R|$ and $L\subseteq G\setminus (R^{-1}\cup\{1\})$. Since 
\begin{align*}
|\{y\in R\mid y^{-1}\notin R\}|&=|J|,\\
|\{y\in L\mid y^{-1}\notin L\}|&\ge |J\cup\{x\}|>|J|,
\end{align*} there is no automorphism $\a$ of $G$ with $R^{\a}= L$, which is a contradiction.~$_\blacksquare$ 

\smallskip

\noindent\textsc{Case 6: }No element in $H$ as order at least $3$, that is, each element in $H$ has order $2$.

\smallskip

\noindent Suppose that $K$ contains an element $x$ having order at least $3$. Let $U$ be any subset of $H$ with $|U|=|K|$ and let $L:=J\cup U$. Then $|L|=|R|$ and $L\subseteq G\setminus (R^{-1}\cup\{1\})$. No element in $J$ has order $2$ and hence $$|\{y\in R\mid o(y)=2\}|= |\{y\in K\mid o(y)=2\}|\le |K\setminus\{x\}|=|K|-1.$$
On the other hand, $\{y\in L\mid o(y)=2\}=U$ and hence $|\{y\in L\mid o(y)=2\}|=|U|=|K|$. Therefore, there is no automorphism $\a$ of $G$ with $R^{\a}= L$, which is a contradiction.

Suppose that every element in $K$ has order $2$.
Since $G$ is not an elementary abelian $2$-group and $$G=(R\cup R^{-1})\cup H\cup \{1\}=J\cup J^{-1}\cup K\cup H\cup \{1\},$$ we have $J\neq \emptyset$.
Let $x\in J$, let $U$ be any subset of $H$ with $|U|=|K|+1$ (observe that this is possible because $|H|>|K|$) and let $L:=U\cup (J\setminus\{x\})$. Then $|L|=|R|$ and $L\subseteq G\setminus (R^{-1}\cup\{1\})$. However, since $L$ has more involutions than $R$, there is no automorphism $\a$ of $G$ with $R^{\a}= L$, which is our final contradiction.
\end{proof}

\medskip

\f {\bf Acknowledgement:} This work was supported by the National Natural Science Foundation of China (11571035, 11231008, 11271012) and by the 111 Project of China (B16002).

\end{document}